\newcounter{rmnum}
\newcounter{anum}
\newlength{\noteWidth}
\long\def\notes#1{\ifinner
             {\tiny #1}
             \else
              \marginpar{\parbox[t]{\noteWidth}{\raggedright\tiny #1}}
               \fi}
\def\IEEEQEDclosed{\mbox{\rule[0pt]{1.3ex}{1.3ex}}}
\def\qed{\ifmmode\IEEEQEDclosed\else{\unskip\nobreak\hfil
\penalty50\hskip1em\null\nobreak\hfil\IEEEQEDclosed
\parfillskip=0pt\finalhyphendemerits=0\endgraf}\fi}
\def\qed{\hspace*{\fill}~\IEEEQED\par\endtrivlist\unskip}
\def\ddt{\frac{\ud}{\ud t}}
\def\Re{\mathbb{R}}
\def\ddt{\frac{\ud}{\ud t}}
\def\notes#1{\marginpar{\tiny #1}\typeout{Notes!
Notes!
Notes!
}}
\renewcommand{\notes}[1]{\typeout{notes!}}
\def\FRAC#1#2#3{\genfrac{}{}{}{#1}{#2}{#3}}
\def\half{{\mathchoice{\FRAC{1}{1}{2}}%
{\FRAC{2}{1}{2}}%
{\FRAC{3}{1}{2}}%
{\FRAC{4}{1}{2}}}}
\def\Re{\field{R}}
\def\k{\sf K}
\def\clZ{{\cal Z}}
\def\E{{\sf E}}
\def\IEEEQEDclosed{\mbox{\rule[0pt]{1.3ex}{1.3ex}}}
\def\qed{\nobreak\hfill\IEEEQEDclosed}
\newcommand{\expect}{ {\sf E} }
\def\clZ{{\cal Z}}
\newtheorem{theorem}{Theorem}
\newtheorem{lemma}{Lemma}
\newtheorem{remark}{Remark}
\newtheorem{proposition}{Proposition}
\def\beq{\begin{eqnarray}} 
\def\bc{\begin{center}} 
\def\be{\begin{enumerate}}
\def\bi{\begin{itemize}} 
\def\bs{\begin{small}}
\def\bS{\begin{slide}}
\def\ec{\end{center}} 
\def\ee{\end{enumerate}}
\def\ei{\end{itemize}}
\def\es{\end{small}}
\def\eS{\end{slide}}
\def\eeq{\end{eqnarray}}
\newcommand{\newP}[1]{\medskip\noindent{\bf #1:}}
\newcommand{\trace}{\text{Tr}}
\newcommand{\PP}{\sf P}
\newcommand{\ud}{\,\mathrm{d}}
\def\Re{\mathbb{R}}
\def\E{{\sf E}}
\def\clZ{{\cal Z}}
\renewcommand{\Re}{\mathbb{R}}
\def\FRAC#1#2#3{\genfrac{}{}{}{#1}{#2}{#3}}
\def\ddt{\frac{\ud}{\ud t}}
\newcommand{\ii}{k}
\newcommand{\Xii}{S}
\newcommand{\hatXii}{\hat{S}}
\newcommand{\Sigmaii}{\tilde{\Sigma}}
\def\kii{\tilde{\sf K}}
\newcommand{\var}{\text{Var}}
\newcommand{\X}{X}
\newcommand{\Om}{\Omega}
\newcommand{\NN}{\mathcal{N}}
\title{\LARGE \bf
An Optimal Transport Formulation of the Linear Feedback Particle Filter}
\author{Amirhossein Taghvaei, Prashant G. Mehta 
\thanks{Financial support from the NSF CMMI grants 1334987 and 1462773 is gratefully acknowledged. 
}
\thanks{A.~Taghvaei and P.~G.~Mehta are with the Coordinated
  Science Laboratory and the Department of Mechanical Science and
  Engineering at the University of Illinois at Urbana-Champaign (UIUC).
{\tt\scriptsize taghvae2@illinois.edu; mehtapg@illinois.edu;}}
}
\begin{document}

\maketitle
\thispagestyle{empty}
\pagestyle{empty}

\begin{abstract}
Feedback particle filter (FPF) is an algorithm to numerically approximate the solution of the nonlinear filtering problem in continuous time. 
The algorithm implements a feedback control law for a system of particles such that the empirical distribution of particles approximates the posterior distribution. 
However, it has been noted in the literature that the feedback control law is not unique. 
To find a unique control law, the filtering task is formulated here as an optimal transportation problem between the prior and the posterior distributions. 
Based on this formulation, a time stepping optimization procedure is proposed for the optimal control design.
A key difference between the optimal control law and the one in the original FPF, is the replacement of noise term with a deterministic term. 
This difference serves to decreases the simulation variance, as illustrated with a simple numerical example.

\end{abstract}
\section{INTRODUCTION}
Nonlinear filtering is concerned with the problem of computing the posterior distribution of a hidden Markov process $\X_t$, given a time history of observations $Z_t$. 
In the linear Gaussian setting, the solution is given by the Kalman-Bucy filter.
For the general nonlinear non-Gaussian case, the problem is infinite dimensional and only approximate numerical solutions are possible.

The feedback particle filter (FPF) algorithm provides one such approximate numerical solution~\cite{taoyang_TAC12},~\cite{yanlaumehmey12}. 
In the FPF algorithm, the posterior distribution is approximated by empirical distribution of an ensemble of particles. 
The particles are realizations of a stochastic process, denoted in this paper as $\Xii_t$. 
The $\Xii_t$ process is simulated according to, 
\begin{equation}
\ud \Xii_t = u_t(\Xii_t)\ud t + \k_t(\Xii_t) \ud Z_t,
\label{eq:u-k}
\end{equation}
where $u_t$ and $\k_t$ are control laws, designed such that $\Xii_t$ is distributed according to posterior distribution of $X_t$, i.e,
\begin{equation*}
\Xii_t \sim \PP(X_t | \clZ_t),\quad \forall t\geq 0,
\label{eq:consistency} 
\end{equation*}
where $\clZ_t:=\sigma\{Z_s;0 \leq s \leq t\}$ is the filtration generated by observation. 
When this condition is true, the filter is said to be exact. 

In the original FPF algorithm, a particular choice of $u_t$ and $\k_t$ is proposed such that the filter is exact. 
Nevertheless, in its general form \eqref{eq:u-k}, there are infinitely many choices of $u_t$ and $\k_t$ that all lead to exactness. 
Investigating the optimality properties of the choice made in FPF and selecting a unique optimal control law is the focus of the current work. 

The optimality concept considered here is motivated by the optimal transportation literature \cite{villani2003}, \cite{evans}.  
One can interpret feedback particle filter as transporting the initial distribution at time $t=0$ (prior) to the conditional distribution at time $t$ (posterior). 
Clearly, there are infinitely many maps that transport one distribution into another. 
Selecting a unique and optimal map is the basic problem of optimal transportation and also informs the investigation of this paper (see Figure~\ref{fig:opt-transp}).

\begin{figure}[t]
\centering
\includegraphics[width=0.8\columnwidth]{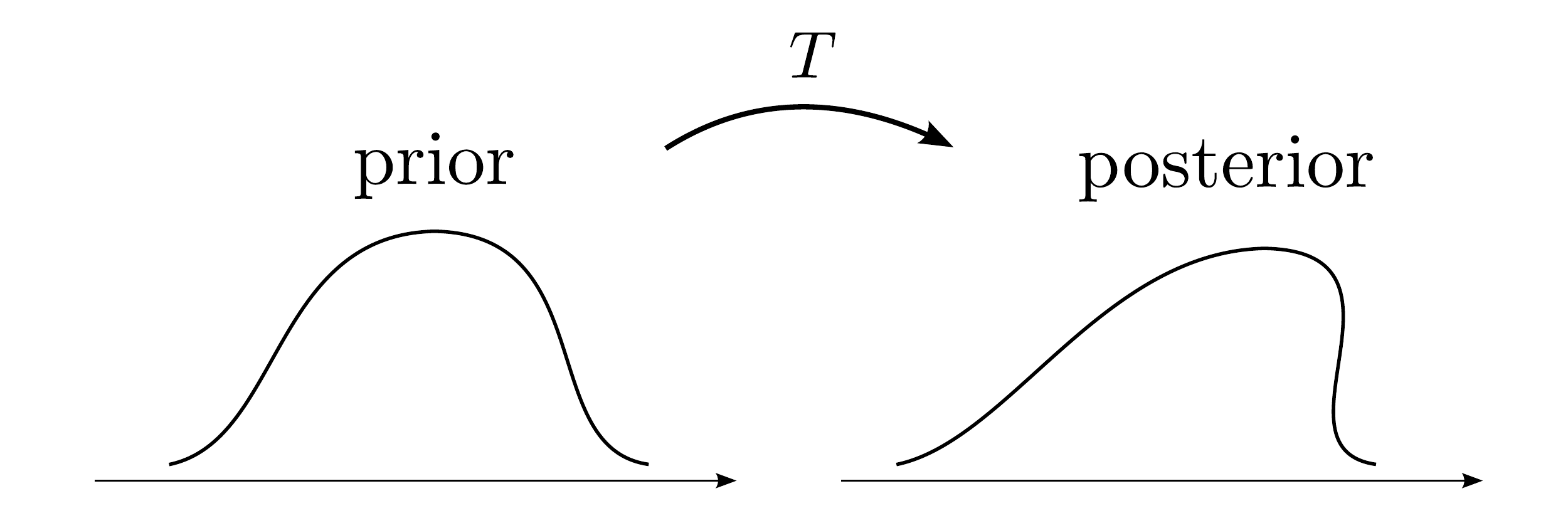}
\caption{Filtering viewed as transporting prior distribution to posterior distribution.}
\label{fig:opt-transp}
\end{figure}

Based on the concept of optimal transportation, a time-stepping optimization procedure is proposed here to obtain a unique optimal control law, denoted as $u^*_t $ and $\k^*_t$. 
In this procedure, a finite time interval is divided into discrete time steps $\{t_0,t_1,\ldots,t_n\}$. 
Then a discrete time random process, $\Xii_{t_k}$ is constructed by initializing $\Xii_{t_0}$ according to the initial prior $\PP(X_0)$, and sequentially evolving $\Xii_{t_k} \to \Xii_{t_{k+1}}$ at each time-step with a map denoted by $T_k$:
\begin{equation*}
\Xii_{t_{k+1}}=T_k(\Xii_{t_k}),\quad \Xii_0 \sim \PP(X_0).
\end{equation*}
The map $T_k$ is obtained by solving an optimal transportation problem between the conditional probability distributions at time instants $t_k$ and $t_{k+1}$, i.e between   $\PP(X_{t_k}|\clZ_{t_k})$ and $\PP(X_{t_{k+1}}|\clZ_{t_{k+1}})$. 
By construction, $\Xii_{t_k}$ is distributed according to $\PP(X_{t_k}|\clZ_{t_k})$ for all $k \in \{1,\ldots,n\}$. 
Finally, by taking the continuous-time limit, a continuous time process $\Xii_t$ and therefore the unique control law, $u^*$ and $\k^*$ for \eqref{eq:u-k} are obtained. 
Figure~\ref{fig:time-step} illustrates the procedure.

The contributions of this paper are as follows:
An optimal transport formulation of the feedback particle filter is introduced. 
The new formulation helps to solve the uniqueness issue for the FPF.
For the linear Gaussian filtering problem, the optimal transport FPF is obtained and compared to the original FPF. 
Using a simple example, the advantage of optimal formulation in reducing simulation variance is analytically and numerically demonstrated.

There are two streams of literature that are relevant to this work: 
(i)~Particle filter algorithms that seek to replace the importance sampling step with a deterministic approach~\cite{daum10},~\cite{mitter03},~\cite{crisan10},~\cite{taoyang_TAC12}; 
(ii)~Optimal transportation techniques for uncertainty propagation that includes synthesis of optimal transport maps for implementing the Bayes rules as a  special case~\cite{reich13}, \cite{reich11},~\cite{MarzoukBayesian}. 
The present work integrates these two streams, that furthermore highlights the optimal transportation roots of the FPF algorithm. 

The evaluation of the FPF algorithm and its comparison to other nonlinear filtering algorithms appears in~\cite{berntorp2015},~\cite{stano2014}~\cite{tilton2012}. 
For additional applications of optimal transportation see~\cite{TannenbaumTexture},~\cite{ColemanPosterior},~\cite{parno14},~\cite{benamou2000}.


The outline of the remainder of this paper is as follows:
In section \ref{sec:consistency-nonuniqueness} the linear FPF is introduced and the uniqueness issue is discussed. Section \ref{sec:opt-transp} includes a short background on the optimal transportation problem.
In Section \ref{sec:opt-transp-sde}, the time stepping optimization procedure is proposed and applied to the linear Gaussian filtering problem. 
Finally Section \ref{sec:numerics} includes a comparison between the original FPF and the optimal transport FPF.

\begin{figure}[t]
\centering
\includegraphics[width=0.95\columnwidth]{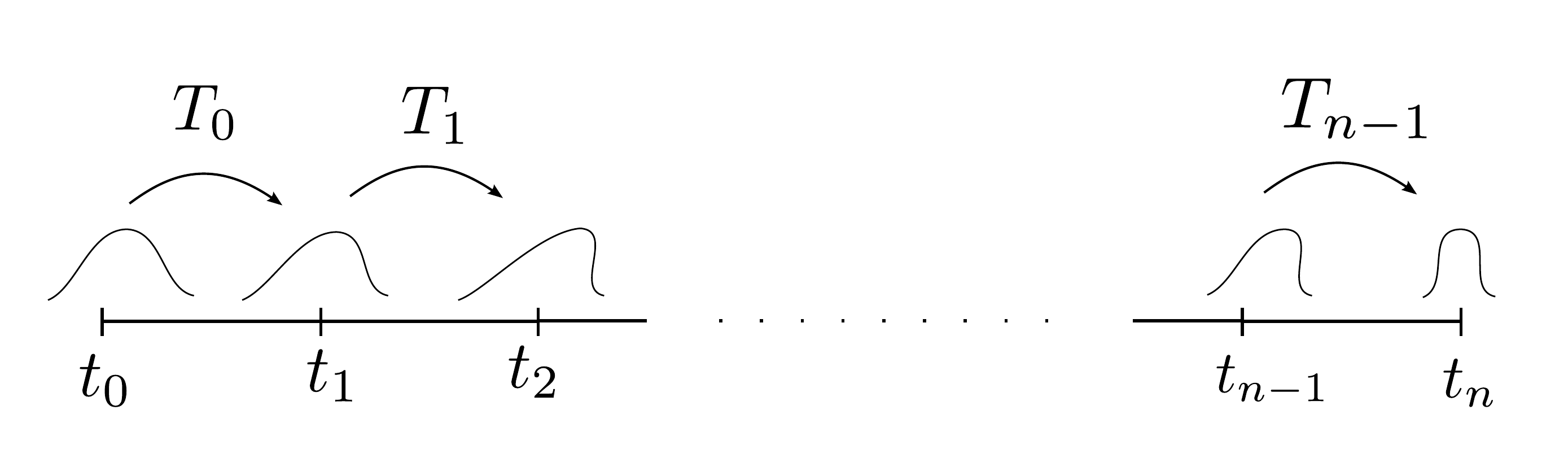}
\caption{The time stepping optimization procedure.}
\label{fig:time-step}
\end{figure}

\newP{Notation}
$\PP$ denotes a probability distribution. $X \sim \PP$ means the random variable $X$ is distributed according to $\PP$.
$\NN(\mu,\Sigma)$ is a Gaussian probability distribution with mean $\mu$ and covariance $\Sigma$. $A \succ 0$ means A is a positive definite matrix.
All sdes are to be interpreted in It\^o sense.

\section{Exactness and Non-uniqueness}
\label{sec:consistency-nonuniqueness}
\newP{Linear Gaussian filtering} Consider the linear Gaussian filtering problem:
\begin{subequations}
\begin{align}
\ud \X_t &= A \X_t \ud t + \ud B_t, 
\label{eqn:Signal_Process}
\\
\ud Z_t &= C \X_t\ud t + \ud W_t,
\label{eqn:Obs_Process}
\end{align}
\end{subequations}
where $\X_t\in\Re^d$ is the state at time $t$, 
$Z_t \in\Re^m$ is the
observation process, $A$, $C$ are matrices of appropriate dimension, and
$\{B_t\}$, $\{W_t\}$ are two mutually independent Wiener
processes taking values in $\Re^d$ and $\Re^m$, respectively. 
Without loss of generality, it is assumed that the covariance
matrices associated with $\{B_t\}$, $\{W_t\}$ are identity matrices. 
The initial condition is assumed to have a Gaussian distribution, $\NN(\hat{X}_0,\Sigma_0)$, with $\Sigma_0\succ0$. The initial condition and noise processes are also independent.
The filtering problem is to compute the posterior distribution $\PP(\X_t|\clZ_t)$, where $\clZ_t=\sigma(Z_s; \leq s \leq t)$.

\newP{Kalman filter} In the linear Gaussian case, the posterior distribution $\PP(\X_t|\clZ_t)$ is Gaussian $\NN(\hat{\X}_t,\Sigma_t)$, whose mean and variance are given by the Kalman-Bucy filter,
\begin{subequations}
\begin{align}
\ud \hat{X}_t &= A \hat{X}_t \ud t + \k_t(\ud Z_t - C\hat{X}_t \ud t),
\label{eq:kalman-mean} \\
\ddt \Sigma_t &= A \Sigma_t + \Sigma_t A^T + I - \Sigma_tC^TC\Sigma_t,
\label{eq:kalman-variance}
\end{align}
\end{subequations}
where $\k_t :=\Sigma_tC^T$ is the Kalman gain.

\newP{Feedback particle filter} The feedback particle filter for linear Gaussian problem \eqref{eqn:Signal_Process}-\eqref{eqn:Obs_Process} is given by, 
\begin{equation}
\begin{aligned}
\ud \Xii_t= A\Xii_t \ud t + \ud \tilde{B}_t + \kii_t
 \big( \ud Z_t - \frac{C \Xii_t + C \hatXii_t}{2} \ud t \big), 
\end{aligned}
\label{eq:FPF-lin}
\end{equation}
where $\kii_t := \Sigmaii_tC^T$ is the Kalman gain, $\{\tilde{B}_t\}$ is a standard Wiener process, $\hatXii_t := \E[ \Xii_t |\clZ_t]$, $\Sigmaii_t
:= \E[ (\Xii_t - \hatXii_t)(\Xii_t - \hatXii_t)^T |\clZ_t]$, 
and $\Xii_0\sim\NN(\hat{X}_0,\Sigma_0)$.

\medskip
\begin{remark}
In practice, $N$ realizations of $S_t$ are simulated according to finite-$N$ system: 
\begin{equation*}
\begin{aligned}
\ud \Xii^i_t= A\Xii^i_t \ud t + \ud \tilde{B}^i_t + \kii^{(N)}_t
 \big( \ud Z_t - \frac{C \Xii^i_t + C \hatXii^{(N)}_t}{2} \ud t \big), 
\end{aligned}
\label{eq:finte-N}
\end{equation*}
for $i=1,\ldots,N$, where $\{B^i_t\}$ are independent Wiener processes, $\Xii_0^i \overset{i.i.d}{\sim} \NN(\hat{\X}_0,\Sigma_0)$, $\kii_t^{(N)}:=\Sigmaii_t^{(N)}C^T$, and
\begin{equation*}
\begin{aligned}
\hatXii_t^{(N)}&=\frac{1}{N}\sum_{i=1}^N \Xii^i_t,\quad 
\Sigmaii_t^{(N)} = \frac{1}{N}\sum_{i=1}^N (\Xii_t^i-\hatXii_t)(\Xii_t^i-\hatXii_t)^T.
\end{aligned}
\end{equation*}
The sde \eqref{eq:FPF-lin} represents the mean-field limit of the finite-$N$ algorithm, which is the focus of present work. 
\label{remark:finite-N}
\end{remark}
\medskip
\begin{theorem}
\label{thm_lin} {\bf (Exactness of linear FPF)} Consider the linear Gaussian filtering problem~\eqref{eqn:Signal_Process}-\eqref{eqn:Obs_Process}, and the linear
FPF~\eqref{eq:FPF-lin}. If $\PP(X_0)=\PP(\Xii_0)$, then
\begin{equation}
\PP(\Xii_t|\clZ_t)=\PP(\X_t | \clZ_t),\quad \forall t>0.
\label{eq:exactness-lin}
\end{equation}

\label{thm:consistency-FPF-lin}
\end{theorem}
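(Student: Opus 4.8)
The plan is to exploit the Gaussian structure of the problem: since the true posterior is $\NN(\hat{X}_t,\Sigma_t)$, it suffices to show that the conditional law $\PP(\Xii_t|\clZ_t)$ is Gaussian and that its conditional mean $\hatXii_t$ and conditional covariance $\Sigmaii_t$ coincide with the Kalman--Bucy mean and covariance of \eqref{eq:kalman-mean}--\eqref{eq:kalman-variance}. I would organize the whole argument around deriving closed evolution equations for $\hatXii_t$ and $\Sigmaii_t$ directly from \eqref{eq:FPF-lin} and matching them term-by-term to the Kalman filter, then invoking uniqueness of solutions.

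First I would compute $\ud\hatXii_t$ by taking the conditional expectation $\E[\,\cdot\,|\clZ_t]$ of \eqref{eq:FPF-lin}. The artificial process-noise increment $\ud\tilde B_t$ averages to zero (it is independent of the observations), the gain $\kii_t$ is $\clZ_t$-measurable, and the symmetric innovation $\tfrac12(C\Xii_t+C\hatXii_t)$ averages to $C\hatXii_t$; this yields $\ud\hatXii_t = A\hatXii_t\,\ud t + \kii_t(\ud Z_t - C\hatXii_t\,\ud t)$, which is exactly \eqref{eq:kalman-mean} with $\kii_t=\Sigmaii_t C^T$ in place of $\k_t=\Sigma_t C^T$.

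Next I would introduce the conditional error $\tilde X_t := \Xii_t - \hatXii_t$ and subtract the mean equation from \eqref{eq:FPF-lin}. The key observation is that the $\ud Z_t$ terms cancel, leaving the closed linear SDE $\ud\tilde X_t = (A - \tfrac12\kii_t C)\tilde X_t\,\ud t + \ud\tilde B_t$ with $\clZ_t$-measurable coefficients driven by the independent Wiener process $\tilde B_t$. Applying It\^o's rule to $\tilde X_t\tilde X_t^T$ and taking $\E[\,\cdot\,|\clZ_t]$, the quadratic-variation term contributes $I\,\ud t$, and substituting $\kii_t=\Sigmaii_t C^T$ collapses the two symmetric $-\tfrac12\Sigmaii_t C^TC\Sigmaii_t$ contributions into a single Riccati term, giving $\ddt\Sigmaii_t = A\Sigmaii_t + \Sigmaii_t A^T + I - \Sigmaii_t C^TC\Sigmaii_t$, which is precisely \eqref{eq:kalman-variance}. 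Since $(\hatXii_t,\Sigmaii_t)$ and $(\hat{X}_t,\Sigma_t)$ solve the same equations with the same initial data $(\hat{X}_0,\Sigma_0)$, uniqueness forces them to agree, and \eqref{eq:exactness-lin} follows.

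The main obstacle is rigorously justifying that $\PP(\Xii_t|\clZ_t)$ is Gaussian in the first place, so that matching two moments is enough. Because $\kii_t$ and $\hatXii_t$ depend on the conditional law itself, \eqref{eq:FPF-lin} is a McKean--Vlasov (mean-field) SDE rather than a classical linear SDE, and conditional Gaussianity cannot simply be quoted. I would address this by a self-consistency argument: posit a Gaussian conditional law, note that the induced error dynamics $\ud\tilde X_t = (A - \tfrac12\kii_t C)\tilde X_t\,\ud t + \ud\tilde B_t$ are conditionally linear with Gaussian driving noise and Gaussian initial condition (hence propagate Gaussianity), and verify that the resulting $(\hatXii_t,\Sigmaii_t)$ close the system consistently; well-posedness of the Riccati equation then guarantees a unique such law, completing the argument.
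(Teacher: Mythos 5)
Your proposal takes essentially the same route as the paper's proof: take the conditional expectation of \eqref{eq:FPF-lin} to obtain the mean equation \eqref{eq:Xii-mean}, subtract it to get the closed linear error dynamics $\ud E_t = (A - \tfrac{1}{2}\Sigmaii_t C^TC)E_t\,\ud t + \ud\tilde{B}_t$, derive the Lyapunov/Riccati equation for $\Sigmaii_t$ matching \eqref{eq:kalman-variance}, and conclude by uniqueness of solutions with common initial data. Your closing paragraph on justifying conditional Gaussianity despite the McKean--Vlasov dependence of $\kii_t$ and $\hatXii_t$ on the conditional law is a point the paper passes over (it simply asserts Gaussianity from linearity of the sde), so your treatment is, if anything, more careful on that step.
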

\begin{proof}
The conditional distribution of $\Xii_t$ is a Gaussian because the sde \eqref{eq:FPF-lin} is linear and the initial condition is Gaussian. So, to prove \eqref{eq:exactness-lin}, it is sufficient to show that the conditional mean and variance of $\Xii_t$ evolve according to Kalman filtering equations. 
The equation for the conditional mean is obtained by taking conditional
mean for both sides of~\eqref{eq:FPF-lin}, using
linearity and the fact that $B_t^i$ is zero mean. 
\begin{equation}
\ud \hatXii_t = A \hatXii_t \ud t + \kii_t(\ud Z_t - C\hatXii_t\ud t).
\label{eq:Xii-mean}
\end{equation}
This equation is similar to \eqref{eq:kalman-mean} except that the gain $\k_t =\Sigma_tC^T$ \eqref{eq:kalman-mean} is replaced by $\kii_t=\Sigmaii_tC^T$. In order to show the variances are equal, define $E_t$ according to,
\begin{equation*}
E_t := \Xii_t - \hatXii_t. \label{eq:decompose}
\end{equation*}
The equation for $E_t$ is obtained by simply
subtracting~\eqref{eq:Xii-mean} from~\eqref{eq:FPF-lin}.
This gives,
\begin{equation*}
\ud E_t = (A - \frac{\Sigmaii_t C^TC}{2}) E_t + \ud \tilde{B}_t. \label{eq:E}
\end{equation*}
The equation for the variance of $E_t$ is now given by the Lyapunov equation,
\begin{equation*}
\ddt \Sigmaii_t = A \Sigmaii_t + \Sigmaii_tA^T + I - \Sigmaii_tC^TC\Sigmaii_t.
\label{eq:Xii-variance}
\end{equation*}
which is identical to \eqref{eq:kalman-variance}, starting with the same initial condition. 
Hence $\Sigmaii_t =\Sigma_t$ for all $t\geq0$. 
This also implies $\kii_t=\k_t$, which further implies that the sde for conditional mean \eqref{eq:Xii-mean} is identical to \eqref{eq:kalman-mean}. 
So $\hat{X}_t = \hatXii_t$. 
\end{proof}

\medskip
\begin{remark}{\bf (Non-uniqueness)} 
The exactness proof above suggests a general procedure to construct an exact $\Xii_t$ process. 
In particular, express $\Xii_t$ as sum of two terms, \[\Xii_t = \hatXii_t + E_t.\] 
Let $\hatXii_t$ evolve according to \eqref{eq:kalman-mean}. 
And let $E_t$ evolve according to, 
\begin{equation*}
\ud E_t = G_t E_t \ud t, \label{eq:EG}
\end{equation*} 
where $G_t$ is any solution to the matrix equation,
\begin{equation}
G_t \Sigmaii_t + \Sigmaii_t G_t^T = A \Sigmaii_t + \Sigmaii_tA^T + I - \Sigmaii_tC^TC\Sigmaii_t.
\label{eq:G-gen}
\end{equation}
By construction, the equation for the variance is given by \eqref{eq:kalman-variance}.
In general there are infinitely many solutions for \eqref{eq:G-gen}. 
These solutions are given by,
\begin{equation*}
G_t = A + \frac{1}{2}\Sigmaii_t^{-1} - \frac{1}{2}\Sigmaii_tC^TC + \Om_t\Sigmaii_t^{-1} ,
\label{eq:G-gen-N}
\end{equation*} 
where $\Om_t$ is any skew symmetric matrix. 
The choice $\Om_t = 0$ corresponds to the linear FPF in \eqref{eq:FPF-lin}.  
In section \ref{sec:opt-transp-sde}, it is shown that a symmetric choice of $G_t$ is optimal in the optimal transportation sense.    
\label{remark:uniqueness}
\end{remark}
%

\section{Background on Optimal transportation}
\label{sec:opt-transp}

\newP{Problem statement} Let $\PP_X$ and $\PP_Y$ be two given probability measures on $\Re^d$ with finite second moments.
The optimal transportation problem is to minimize
\begin{equation}
\begin{aligned}
\min_{T}~ \expect[(T(X)-X)^2], 
\end{aligned}
\label{eq:opt-transp}
\end{equation}   
over all maps $T:\Re^d \to \Re^d$ such that 
\begin{equation}
X\sim \PP_X, ~T(X) \sim \PP_Y.
\label{eq:constraint}
\end{equation}
If it exists, the minimizer $T^*$, is called the optimal transport map between $P_X$ and $P_Y$. 
The optimal cost is referred to as $L^2$-Wasserstein distance between $P_X$ and $P_Y$.
\medskip
\begin{theorem}
{{\bf (Existence and uniqueness}, Theorem.~2.12 in~\cite{villani2003}})  Consider the optimization problem \eqref{eq:opt-transp}, with constraint \eqref{eq:constraint}.
If $P_X$ is absolutely continuous with respect to the Lebesgue measure, then there exists a unique optimal map.  
\end{theorem}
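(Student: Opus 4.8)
The plan is to establish the result through the Kantorovich relaxation together with Brenier's characterization of the optimal map as the gradient of a convex function. First I would rewrite the objective: since $(T(X)-X)^2 = |X|^2 - 2\langle X, T(X)\rangle + |T(X)|^2$ and the marginals $P_X, P_Y$ are fixed with finite second moments, the quantities $\E[|X|^2]$ and $\E[|T(X)|^2] = \int |y|^2 \, dP_Y(y)$ are constants independent of the admissible $T$. Hence minimizing $\E[(T(X)-X)^2]$ over admissible maps is equivalent to maximizing the correlation $\E[\langle X, T(X)\rangle]$, and it is this reformulation that links the problem to convexity.

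Next I would relax the Monge problem, replacing maps $T$ by couplings $\pi$ on $\R^d \times \R^d$ with marginals $P_X$ and $P_Y$ and the objective by $\int |x-y|^2 \, d\pi$. The set $\Pi(P_X,P_Y)$ of such couplings is nonempty (it contains the product measure), convex, and weakly compact: tightness follows from tightness of the fixed marginals, and weak closedness is immediate. Because $|x-y|^2$ is nonnegative and lower semicontinuous, the direct method yields a minimizing coupling $\pi^*$. The key structural step is that the support of any optimal $\pi^*$ is cyclically monotone: if it were not, mass could be rearranged along a finite cycle to strictly lower the cost, contradicting optimality. By Rockafellar's theorem a cyclically monotone set lies in the subdifferential graph of a lower semicontinuous convex function $\psi$, so $\pi^*$ is concentrated on $\{(x,y) : y \in \partial\psi(x)\}$.

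The absolute-continuity hypothesis on $P_X$ enters decisively here. A convex function is locally Lipschitz, hence by Rademacher's theorem differentiable Lebesgue-almost everywhere, so its nondifferentiability set has Lebesgue measure zero; since $P_X$ is absolutely continuous, this exceptional set is also $P_X$-null. Therefore $\partial\psi(x) = \{\nabla\psi(x)\}$ is single-valued for $P_X$-almost every $x$, and $\pi^*$ is concentrated on the graph of the map $T^*(x) := \nabla\psi(x)$. This shows $\pi^*$ is induced by the admissible Monge map $T^*$, which attains the Kantorovich minimum and hence the minimum over maps as well. For uniqueness I would argue that any optimizer is $P_X$-a.e. the gradient of such a convex potential and that this gradient is determined $P_X$-a.e. independently of the chosen potential, so two optimal maps must agree $P_X$-a.e.

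The hard part will be the passage from the abstract optimal coupling to an honest map. Existence of $\pi^*$ is a routine compactness argument, but verifying that its support is cyclically monotone and then invoking Rockafellar's representation to produce the convex potential is the substantive analytic core. The absolute-continuity assumption then does the final work of collapsing the multivalued subdifferential onto a single-valued gradient on a set of full $P_X$-measure; without it the optimizer may genuinely fail to be a map, which is precisely why the hypothesis cannot be dropped.
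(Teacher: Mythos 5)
This theorem is quoted from the literature --- the paper cites it as Theorem~2.12 of Villani and supplies no proof of its own, so there is no internal argument to compare against; the only proof the paper contains in this circle of ideas is the Gaussian special case (Prop.~2 in the appendix), established by a direct covariance bound rather than by the general theory. Your sketch reconstructs precisely the proof in the cited source: Kantorovich relaxation and weak compactness for existence of an optimal plan, cyclical monotonicity of its support, Rockafellar's representation producing a convex potential $\psi$, and Rademacher's theorem combined with absolute continuity of $P_X$ to collapse $\partial\psi$ to a single-valued gradient --- the Knott--Smith/Brenier argument. That is the right route, and the emphasis you place on where absolute continuity enters is exactly correct.

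One step is stated too loosely: for uniqueness you assert that ``this gradient is determined $P_X$-a.e.\ independently of the chosen potential,'' but a priori two optimal maps could arise from genuinely different convex potentials, and nothing you have written rules this out. The standard repair uses linearity of the relaxed problem: if $T_1=\nabla\psi_1$ and $T_2=\nabla\psi_2$ both induce optimal plans $\pi_1,\pi_2$, then $\frac{1}{2}(\pi_1+\pi_2)$ is also optimal, hence its support is cyclically monotone and contained in the subdifferential graph of a \emph{single} convex function $\psi$; single-valuedness of $\partial\psi$ at $P_X$-a.e.\ point (again by Rademacher plus absolute continuity) then forces $T_1=T_2$ $P_X$-a.e. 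A smaller routine point worth flagging: a convex function is locally Lipschitz only on the interior of its effective domain, so one also uses that the boundary of a convex set is Lebesgue-null, hence $P_X$-null. With these patches your outline is a correct rendering of the proof in the cited reference.
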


\medskip
\begin{proposition}{\bf (Optimal map between Gaussians}, Prop.~7 in~\cite{givens84}) 
Consider the optimal transportation problem \eqref{eq:opt-transp}. 
Suppose $\PP_X$ and $\PP_Y$ are Gaussian distributions, $\NN(\hat{X},\Sigma_X)$ and $\NN(\hat{Y},\Sigma_Y)$, with $\Sigma_X,\Sigma_Y\succ 0$.
Then the optimal transport map between $P_X$ and $P_Y$ is given by,
\begin{equation}
T(x) = \hat{Y} + F(x-\hat{X}),
\label{eq:optmapgauss}
\end{equation}
where 
\begin{equation}
F = \Sigma_Y^{\half}(\Sigma_Y^{\half}\Sigma_X\Sigma_Y^\half)^{-\half}\Sigma_Y^\half.
\label{eq:F}
\end{equation} 
In the scalar case ($d=1$), the optimal map is,
\begin{equation*}
T(x) = \hat{Y} + \sqrt{\frac{\Sigma_{Y}}{\Sigma_X}}(x-\hat{X}).
\label{eq:OMGR}
\end{equation*}
\label{prop:opt-map-Gaussians}
\end{proposition}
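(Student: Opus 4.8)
The plan is to combine the uniqueness guaranteed by the existence/uniqueness theorem above with Brenier's characterization of the optimal map as the gradient of a convex function. Since a Gaussian is absolutely continuous with respect to Lebesgue measure, there is a unique optimal map, and moreover it equals $\nabla\phi$ for some convex $\phi$. An affine map $T(x)=Fx+b$ is the gradient of a convex function precisely when $F$ is symmetric and positive semidefinite (take $\phi(x)=\half x^T F x + b^T x$). Consequently, it suffices to exhibit a symmetric positive definite $F$ together with the correct offset $b$ so that $T$ pushes $\PP_X$ forward to $\PP_Y$; by uniqueness this $T$ must then be the optimal map, and no separate argument that the optimum is affine is needed.

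First I would dispose of the pushforward constraint. For the affine map $T(x)=\hat{Y} + F(x-\hat{X})$, the image of $X\sim\NN(\hat{X},\Sigma_X)$ is Gaussian with mean $\hat{Y}$ and covariance $F\Sigma_X F^T$. Matching this to $\NN(\hat{Y},\Sigma_Y)$ fixes the mean automatically and reduces the entire problem to solving the matrix equation $F\Sigma_X F^T=\Sigma_Y$ for a symmetric positive definite $F$.

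The remaining step is to verify that the stated $F$ solves this equation. Writing $C:=\Sigma_Y^\half\Sigma_X\Sigma_Y^\half$, which is symmetric positive definite, the proposed map in \eqref{eq:F} is $F=\Sigma_Y^\half C^{-\half}\Sigma_Y^\half$. This matrix is symmetric, and it is positive definite because it is congruent to $C^{-\half}$. Since $F$ is symmetric, $F\Sigma_X F^T=F\Sigma_X F$, and substituting while using $\Sigma_Y^\half\Sigma_X\Sigma_Y^\half=C$ collapses the middle factor to give $F\Sigma_X F=\Sigma_Y^\half C^{-\half}\, C\, C^{-\half}\Sigma_Y^\half=\Sigma_Y$, so the constraint holds. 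By the uniqueness argument above, this $F$ yields the optimal map, which proves \eqref{eq:optmapgauss}--\eqref{eq:F}; the scalar formula follows by specializing every matrix square root to an ordinary square root when $d=1$.

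I expect the only genuine subtlety to be the appeal to the convex-potential characterization in order to certify optimality. A purely variational route, which instead restricts to affine maps, expands $\expect[(T(X)-X)^2]$ and uses the constraint to reduce the cost to maximizing $\tr(F\Sigma_X)$ subject to $F\Sigma_X F^T=\Sigma_Y$, and then solves this trace problem via a singular value decomposition together with von Neumann's trace inequality, would also deliver the same $F$; however, it carries the extra burden of ruling out non-affine competitors. Using the gradient-of-a-convex-function characterization sidesteps that burden entirely and is the cleaner path.
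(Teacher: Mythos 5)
Your proof is correct, but it takes a genuinely different route from the paper. You certify optimality by importing Brenier's characterization (the optimal map is the gradient of a convex potential, unique when $\PP_X$ is absolutely continuous --- this is part of the same Theorem~2.12 in Villani that the paper cites), so your whole burden reduces to the algebraic check that $F=\Sigma_Y^{\half}(\Sigma_Y^{\half}\Sigma_X\Sigma_Y^{\half})^{-\half}\Sigma_Y^{\half}$ is symmetric, positive definite, and satisfies $F\Sigma_X F=\Sigma_Y$, which you carry out correctly. The paper instead gives a self-contained variational argument: it expands $\expect[|T(X)-X|^2]$, reduces the problem to maximizing $\expect[X^TY]$, and derives the bound $2\expect[X^TY]\leq \trace(F\Sigma_X)+\trace(\Sigma_Y F^{-1})$ from the elementary inequality $0\leq \expect\left[|F^{\half}X-F^{-\half}Y|^2\right]$ (using symmetry and positivity of this specific $F$), then shows $Y^*=FX$ attains the bound via $F\Sigma_X F=\Sigma_Y$. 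One correction to your closing assessment: the paper's variational route does \emph{not} restrict to affine maps and needs no SVD or von Neumann trace inequality --- the completing-the-square bound holds for an \emph{arbitrary} coupling of $X$ and $Y$, so non-affine competitors are ruled out for free, exactly the burden you feared. The trade-off is thus: your argument is shorter and conceptually transparent but leans on the full Brenier machinery, while the paper's is elementary, proves optimality over all couplings directly, and shares with yours the same algebraic core (a symmetric positive definite solution of $F\Sigma_X F=\Sigma_Y$).
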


\medskip
\begin{remark}
Consider affine maps of the form,
\[T(x) = \hat{Y} + F(x-\hat{X}).\] 
The constraint $T(X) \sim Y$ is satisfied for all matrices $F$ such that
\begin{equation}
F\Sigma_XF^T = \Sigma_Y.
\label{eq:F-constraint-1}
\end{equation} 
The general solution of \eqref{eq:F-constraint-1} is,
\begin{equation*}
F = \Sigma_Y^\half U \Sigma_X^{-\half},
\end{equation*}
for all orthogonal matrices $U$. 
The optimal choice of $U$ is the one that serves to make $F$ a symmetric matrix.
\end{remark}

\section{Optimal transport sde}
\label{sec:opt-transp-sde}
Consider the linear Gaussian filtering problem \eqref{eqn:Signal_Process}-\eqref{eqn:Obs_Process}, 
over a finite time interval $[0,T]$. 
The objective of this section is to design a unique optimal sde whose solution, denoted as $\Xii_t$ has conditional probability distribution equal to the conditional probability distribution of $\X_t$. That is, 
\begin{equation*}
\PP(\Xii_t | \clZ_t) = \PP(\X_t | \clZ_t),\quad \text{for}\quad t \in [0,T].
\end{equation*} 
Recall that $\PP(\X_t|\clZ_t)$ is Gaussian with mean and variance that evolve according to 
the Kalman filtering equations \eqref{eq:kalman-mean}-\eqref{eq:kalman-variance}.
\subsection{Time stepping optimization procedure}
The following time stepping optimization procedure is proposed to obtain the desired sde: 
\begin{enumerate}
\item Divide the time interval $[0,T]$ into $n \in \mathbb{N}$ equal time steps with the time instants $t_0=0 < t_1 < \ldots < t_n = T$. 
\medskip
\item Initialize a discrete time random process $\{\Xii_{t_k}\}_{k=1}^n$ according to the initial distribution (prior) of $X_0$,
\begin{equation*}
\Xii_{t_0} \sim \PP(X_0).
\end{equation*}
\item For each time step $[t_k,t_{k+1}]$, evolve the process $\Xii_{t_k}$ according to the map $T_k$,
\begin{equation}
\Xii_{t_{\ii+1}}=T_\ii(\Xii_{t_\ii}),\quad \text{for}\quad \ii=0,\ldots,n-1,
\label{eq:updatestep}
\end{equation}
where the map $T_k$ is the optimal transport map between two probability measures, $\PP(X_{t_k}|\clZ_{t_k})$ and $\PP(X_{t_{k+1}}|\clZ_{t_{k+1}})$. 

\medskip
\item Take the limit as $n\to \infty$ to obtain the continuous time process $\Xii_t$ and the desired sde,
\begin{equation*}
\ud \Xii_t = u_t^*(\Xii_t)\ud t + \k_t(\Xii_t)\ud Z_t.
\label{eq:opt-sde-uk}
\end{equation*} 
\end{enumerate}

\medskip
The procedure leads to the control laws $u^*_t$ and $\k^*_t$ that depends upon $\PP(\X_t|\clZ_t)$. 
Since $\PP(X_t|\clZ_t)$ is unknown, one can replace it with $\PP(\Xii_t|\clZ_t)$, as the two are identical by construction. 
In practice, one would also need to approximate $\PP(\Xii_t|\clZ_t)$ by simulating $N$ realizations of the process $\Xii_t$ (see Remark \ref{remark:finite-N}). 
  
The resulted sde \eqref{eq:opt-sde-uk} is referred to as the {\it optimal transport sde}.
Next, the optimal transport sde is obtained for two cases:
The scalar case in section \ref{sec:scalar}, and the vector case in section \ref{sec:vector}.
\begin{figure*}[t]
\begin{tabular}{cc}
\subfigure[]{
\includegraphics[width=0.9\columnwidth]{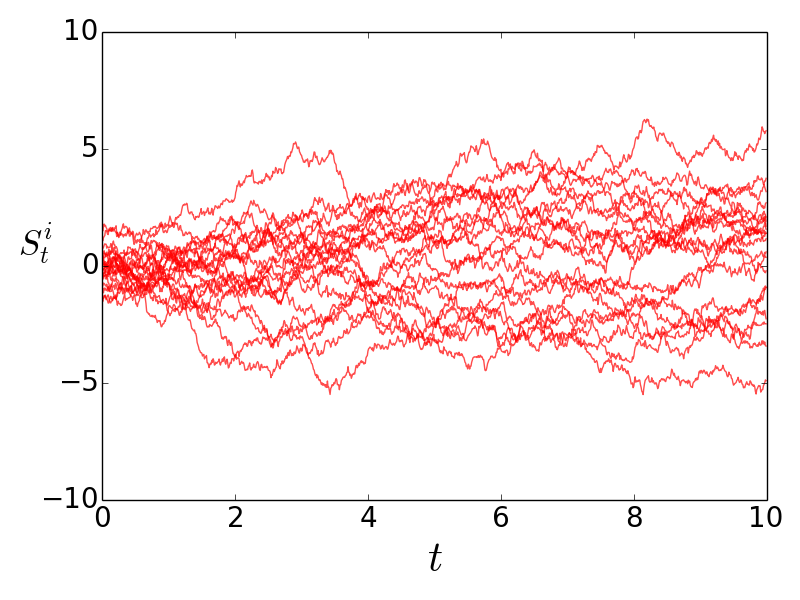}
\label{fig:traj-FPF}
}&
\subfigure[]{
\includegraphics[width=0.9\columnwidth]{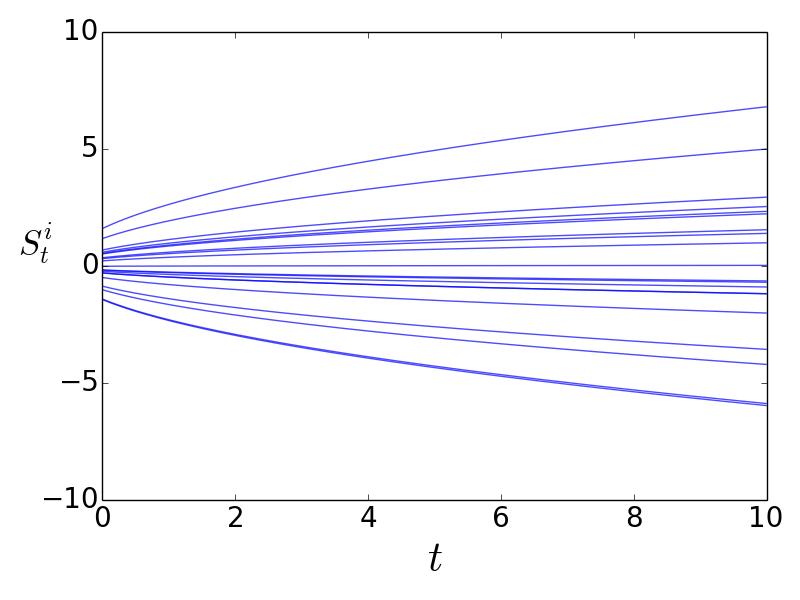}
\label{fig:traj-opt-sde}
}
\end{tabular}
\caption{Particle trajectories obtained by simulating the two models: (a) Monte-Carlo sde \eqref{eq:MC},
(b) Optimal transport sde \eqref{eq:opt-transp-MC}.}
\label{fig:traj}
\end{figure*}
\subsection{Scalar Case}
\label{sec:scalar}

This section considers the linear Gaussian filtering problem 
\eqref{eqn:Signal_Process}-\eqref{eqn:Obs_Process} in the scalar case, where $\X_t,Z_t\in\Re$.
In this case, the matrices $A$ and $C$ are scalars, and denoted by $a$ and $c$ respectively. 
The proof of the following proposition appears in Appendix \ref{proof:opt-sde}.

\medskip
\begin{proposition}
Consider the linear Gaussian filtering problem \eqref{eqn:Signal_Process}-\eqref{eqn:Obs_Process} in the scalar case. 
Assume the pair $(a,c)$ is observable, i.e $c\neq 0$. 
The optimal transport sde for this problem is given by,
\begin{equation}
\begin{aligned}
\ud \Xii_t &= a\Xii_t \ud t +\frac{1}{2\Sigmaii_t}(\Xii_t-\hatXii_t)\ud t + \kii_t(\ud Z_t - \frac{c\Xii_t + c\hatXii_t}{2}\ud t),
\end{aligned}
\label{eq:opt-sde-scalar}
\end{equation}
where $\kii_t=\Sigmaii_tc$, $\hatXii_t=\expect [\Xii_t|\clZ_t]$, $\tilde{\Sigma}_t=\expect [(\Xii_t-\hatXii_t)^2|\clZ_t]$. The sde \eqref{eq:opt-sde-scalar} is exact, i.e,
\begin{equation*}
\PP(\Xii_t|\clZ_t) = \PP(\X_t|\clZ_t)\quad \text{for}\quad t >0,
\label{eq:consistency-scalar}
\end{equation*}
if $\PP(\Xii_0)=\PP(X_0)$.
\label{prop:linear-opt-sde-scalar} 
\end{proposition}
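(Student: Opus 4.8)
The plan is to execute the four-step time-stepping procedure explicitly for the scalar problem and then pass to the limit $n\to\infty$. Fix a uniform partition $0=t_0<\cdots<t_n=T$ with $\Delta t=t_{\ii+1}-t_\ii$. By the Kalman filter, each target law $\PP(X_{t_\ii}\mid\clZ_{t_\ii})$ is the Gaussian $\NN(\hat X_{t_\ii},\Sigma_{t_\ii})$, and on the finite interval $[0,T]$ one has $\Sigma_{t_\ii}>0$; observability ($c\neq0$) is what keeps $\Sigma_t$ bounded and nondegenerate, so that both the optimal map and the drift coefficient $1/(2\Sigmaii_t)$ are well defined. First I would invoke Proposition~\ref{prop:opt-map-Gaussians} in its scalar form to write the one-step optimal map in \eqref{eq:updatestep} as the affine map $T_\ii(x)=\hat X_{t_{\ii+1}}+\sqrt{\Sigma_{t_{\ii+1}}/\Sigma_{t_\ii}}\,(x-\hat X_{t_\ii})$, so that $\Xii_{t_{\ii+1}}=\hat X_{t_{\ii+1}}+\sqrt{\Sigma_{t_{\ii+1}}/\Sigma_{t_\ii}}\,(\Xii_{t_\ii}-\hat X_{t_\ii})$, where by construction $\Xii_{t_\ii}\sim\NN(\hat X_{t_\ii},\Sigma_{t_\ii})$.

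Next I would form the increment $\Xii_{t_{\ii+1}}-\Xii_{t_\ii}$ and Taylor-expand to first order in $\Delta t$. The mean shift $\hat X_{t_{\ii+1}}-\hat X_{t_\ii}$ is supplied by the Kalman mean equation \eqref{eq:kalman-mean}, equal to $a\hat X_{t_\ii}\Delta t+\k_{t_\ii}(\Delta Z_\ii-c\hat X_{t_\ii}\Delta t)$ with $\Delta Z_\ii=Z_{t_{\ii+1}}-Z_{t_\ii}$; this is the origin of the $\ud Z_t$ term. For the slope I would use the Kalman variance equation \eqref{eq:kalman-variance} to expand $\sqrt{\Sigma_{t_{\ii+1}}/\Sigma_{t_\ii}}-1=\tfrac12(\dot\Sigma_{t_\ii}/\Sigma_{t_\ii})\Delta t+o(\Delta t)$, and substitute $\dot\Sigma_t=2a\Sigma_t+1-\Sigma_t^2c^2$ to get the deterministic coefficient $\dot\Sigma_t/(2\Sigma_t)=a+\tfrac1{2\Sigma_t}-\tfrac12\Sigma_tc^2$ multiplying $(\Xii_{t_\ii}-\hat X_{t_\ii})$. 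Collecting terms and letting $\Delta t\to0$, the two $a$-contributions combine into $a\Xii_t\,\ud t$; using $\k_t=\Sigma_tc$, the prediction term $-\k_tc\hat X_t\,\ud t$ from the mean update merges with the $-\tfrac12\Sigma_tc^2(\Xii_t-\hat X_t)\,\ud t$ piece of the slope to produce exactly the symmetric innovation $\kii_t(\ud Z_t-\tfrac{c\Xii_t+c\hat X_t}{2}\ud t)$, while the surviving $\tfrac1{2\Sigma_t}(\Xii_t-\hat X_t)\,\ud t$ is the advertised deterministic drift. Since $\Xii_{t_\ii}\sim\NN(\hat X_{t_\ii},\Sigma_{t_\ii})$ gives $\hatXii_t=\hat X_t$ and $\Sigmaii_t=\Sigma_t$ by construction, this is precisely \eqref{eq:opt-sde-scalar}.

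For exactness I would then argue as in Theorem~\ref{thm:consistency-FPF-lin}. Equation \eqref{eq:opt-sde-scalar} is linear with Gaussian initial data, so $\PP(\Xii_t\mid\clZ_t)$ is Gaussian and it suffices to match its mean and variance to the Kalman equations. Taking conditional expectation annihilates the zero-mean term $\tfrac1{2\Sigmaii_t}(\Xii_t-\hatXii_t)$ and returns \eqref{eq:kalman-mean} with gain $\kii_t$. Setting $E_t=\Xii_t-\hatXii_t$ and subtracting yields the noise-free linear equation $\ud E_t=G_tE_t\,\ud t$ with $G_t=a+\tfrac1{2\Sigmaii_t}-\tfrac12\Sigmaii_tc^2$, so that $\ddt\Sigmaii_t=2G_t\Sigmaii_t=2a\Sigmaii_t+1-\Sigmaii_t^2c^2$, which is \eqref{eq:kalman-variance}. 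Equal initial data then force $\Sigmaii_t=\Sigma_t$, hence $\kii_t=\k_t$ and $\hatXii_t=\hat X_t$, proving exactness. I would also note that this $G_t$ is exactly the symmetric ($\Om_t=0$) scalar solution of \eqref{eq:G-gen}, which confirms the optimality claim of Remark~\ref{remark:uniqueness}, and that the extra $\tfrac1{2\Sigmaii_t}$ relative to the FPF drift in \eqref{eq:FPF-lin} is the deterministic compensation for the removed noise.

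The step I expect to be the main obstacle is the continuous-time limit itself: making rigorous that the composition $T_{n-1}\circ\cdots\circ T_0$ of one-step optimal maps converges, as $\Delta t\to0$, to the solution of the It\^o sde \eqref{eq:opt-sde-scalar}, and that the $o(\Delta t)$ remainders in the slope expansion together with the $\Delta Z_\ii$-dependence of the mean update assemble into the correct It\^o drift and diffusion without leaving residual terms. The algebraic regrouping that produces the symmetric innovation and the subsequent mean/variance matching are then routine.
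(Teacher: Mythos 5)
Your proposal is correct and follows essentially the same route as the paper: the paper likewise instantiates the Gaussian optimal map at each time step, Taylor-expands the slope factor via the Riccati equation (Lemma~\ref{lemma:approx-Sigma-vector}, whose scalar solution is exactly your $G_t=a+\tfrac{1}{2\Sigma_t}-\tfrac{1}{2}\Sigma_t c^2$), and passes to the limit, with your regrouping into the symmetric innovation and your mean/variance exactness check (which the paper omits as ``similar to Theorem~\ref{thm:consistency-FPF-lin}'') both as intended. The one obstacle you flag---convergence of the composed maps and the assembly of the $\Delta Z_\ii$ terms---is handled in the paper more cleanly than in your sketch: rather than expanding $\hat X_{t_{\ii+1}}-\hat X_{t_\ii}$ through the Kalman sde, the paper keeps that increment intact, so summing $\Xii_{t_{\ii+1}}-\Xii_{t_\ii}=(\hat X_{t_{\ii+1}}-\hat X_{t_\ii})+(F_\ii-1)(\Xii_{t_\ii}-\hat X_{t_\ii})$ telescopes the mean part exactly (no stochastic-integral limit is needed there), leaving only the deterministic slope remainder to control, which the uniform $O(\Delta t^2)$ bound of Lemma~\ref{lemma:approx-Sigma-vector}, guaranteed by observability, supplies.
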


\medskip
\begin{remark}
The Bayesian update law in the optimal transport sde \eqref{eq:opt-sde-scalar}
$\kii_t(\ud Z_t - \frac{c\Xii_t+c\hatXii_t}{2}\ud t)$
is identical to FPF \eqref{eq:FPF-lin}. The difference between \eqref{eq:opt-sde-scalar} and \eqref{eq:FPF-lin}  
is the replacement of the stochastic term $\ud B_t$ with a deterministic term $\frac{1}{2\Sigma_t}(\Xii_t-\hatXii_t)\ud t$.
In section \ref{sec:numerics}, a numerical example is described to show that this difference serves to decrease the simulation variance.
\label{remark:scalar}
\end{remark}

\subsection{Vector Case}
\label{sec:vector}

The proof of the following Proposition appears in Appendix \ref{proof:opt-sde}.
\begin{proposition}
Consider the linear Gaussian filtering problem \eqref{eqn:Signal_Process}-\eqref{eqn:Obs_Process}. 
Assume the pair $(A,C)$ is observable. 
Then the optimal transport sde for this problem is given by,
\begin{equation}
\begin{aligned}
\ud \Xii_t &=  A\hatXii_t \ud t + \kii_t(\ud Z_t - C\hatXii_t\ud t) + G_t(\Xii_t-\hatXii_t)\ud t, 
\end{aligned}
\label{eq:opt-sde-vector}
\end{equation}
where $\kii_t:=\Sigmaii_tC^T$, $\hatXii_t=\expect [\Xii_t|\clZ_t]$, $\tilde{\Sigma}_t=\expect [(\Xii_t-\hatXii_t)^2|\clZ_t]$,
and $G_t$ is the solution of the matrix equation,
\begin{equation}
G_t\Sigmaii_t + \Sigmaii_tG_t = A\Sigmaii_t+ \Sigmaii_t A^T + I - \Sigmaii_tC^TC\Sigmaii_t
\label{eq:G}
\end{equation}
The sde \eqref{eq:opt-sde-vector} is exact, i.e,
\begin{equation*}
\PP(\Xii_t|\clZ_t) = \PP(\X_t|\clZ_t)\quad \text{for}\quad t >0,
\label{eq:consistency-vector}
\end{equation*}
if $\PP(\Xii_0)=\PP(X_0)$.
\label{prop:opt-sde-vector}
\end{proposition}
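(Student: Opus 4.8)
The plan is to instantiate the time-stepping optimization procedure of Section~\ref{sec:opt-transp-sde} and to pass to the continuous-time limit. Fix a step size $\Delta t = T/n$ and write $\Sigma_k := \Sigma_{t_k}$, $\hat{X}_k := \hat{X}_{t_k}$ for the Kalman posterior covariance and mean, which by \eqref{eq:kalman-mean}--\eqref{eq:kalman-variance} evolve continuously and stay positive definite on $[0,T]$ (ensured by $\Sigma_0 \succ 0$ together with observability of $(A,C)$). Since both $\PP(X_{t_k}|\clZ_{t_k}) = \NN(\hat{X}_k,\Sigma_k)$ and $\PP(X_{t_{k+1}}|\clZ_{t_{k+1}}) = \NN(\hat{X}_{k+1},\Sigma_{k+1})$ are nondegenerate Gaussians, Proposition~\ref{prop:opt-map-Gaussians} applies and the optimal transport map in \eqref{eq:updatestep} is the affine map $T_k(x) = \hat{X}_{k+1} + F_k(x - \hat{X}_k)$, where $F_k$ is the unique symmetric positive-definite solution of $F_k \Sigma_k F_k = \Sigma_{k+1}$.

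The main step is to expand $F_k$ to first order in $\Delta t$. Because $\Sigma_t$ is continuously differentiable, $\Sigma_{k+1} = \Sigma_k + \dot\Sigma_k\,\Delta t + o(\Delta t)$ and $F_k \to I$, so I write $F_k = I + G_k\,\Delta t + o(\Delta t)$ with $G_k$ symmetric. Substituting into $F_k\Sigma_kF_k = \Sigma_{k+1}$ and matching first-order terms gives $G_k\Sigma_k + \Sigma_kG_k = \dot\Sigma_k$; invoking the Kalman variance equation \eqref{eq:kalman-variance} for $\dot\Sigma_k$ identifies $G_k$ as the solution of the matrix equation \eqref{eq:G}. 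Because $\Sigma_k \succ 0$, this Lyapunov-type equation has a unique solution, which is symmetric since its right-hand side is; this is exactly the symmetric choice singled out in Remark~\ref{remark:uniqueness} ($\Om_t = 0$). Writing the one-step increment as
\begin{equation*}
\Xii_{t_{k+1}} - \Xii_{t_k} = (\hat{X}_{k+1}-\hat{X}_k) + (F_k - I)(\Xii_{t_k} - \hat{X}_k),
\end{equation*}
and substituting the Kalman mean increment $\hat{X}_{k+1} - \hat{X}_k = A\hat{X}_k\Delta t + \kii_k(\Delta Z_k - C\hat{X}_k\Delta t)$ with $\kii_k := \Sigma_k C^T$, together with $F_k - I = G_k\Delta t + o(\Delta t)$, yields the Euler--Maruyama increment of \eqref{eq:opt-sde-vector}.

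Passing to the limit $n \to \infty$ produces the optimal transport sde \eqref{eq:opt-sde-vector}, in which $\hatXii_t$ and $\Sigmaii_t$ replace the Kalman quantities, the two coinciding by construction. Exactness is then verified as in the proof of Theorem~\ref{thm:consistency-FPF-lin}: taking conditional expectations in \eqref{eq:opt-sde-vector} returns \eqref{eq:kalman-mean}, while the error $E_t := \Xii_t - \hatXii_t$ obeys $\ud E_t = G_t E_t\,\ud t$, whose conditional covariance satisfies $\ddt\Sigmaii_t = G_t\Sigmaii_t + \Sigmaii_tG_t^{T}$; by \eqref{eq:G} and the symmetry of $G_t$ this is identical to \eqref{eq:kalman-variance}, so $\Sigmaii_t = \Sigma_t$ and $\hatXii_t = \hat{X}_t$. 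The delicate point throughout is the first-order perturbation of the symmetric factor $F_k$: one must justify that the matrix square root appearing in \eqref{eq:F} is differentiable at the positive-definite matrix $\Sigma_k$, so that the expansion $F_k = I + G_k\Delta t + o(\Delta t)$ is legitimate and the limiting $G_t$ is well defined. This is where positive-definiteness of $\Sigma_t$ is essential, and it is the step I expect to require the most care.
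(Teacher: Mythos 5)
Your proposal takes essentially the same route as the paper's proof: the time-stepping procedure, the affine Gaussian optimal map from Proposition~\ref{prop:opt-map-Gaussians}, a first-order expansion of the symmetric factor $F_k$ that identifies $G_t$ via the Lyapunov equation \eqref{eq:G}, and exactness verified as in Theorem~\ref{thm:consistency-FPF-lin} (which the paper itself only sketches by reference). Your identification of $G_k$ by substituting $F_k = I + G_k\Delta t + o(\Delta t)$ into $F_k\Sigma_kF_k = \Sigma_{k+1}$ and matching first-order terms is the same computation the paper performs by differentiating $F(s)\Sigma_tF(s) = \Sigma_{t+s}$ at $s=0$.

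The one loose end is precisely the step you flag at the close: a per-step error that is merely $o(\Delta t)$, without uniformity in $k$, does not survive the limit, since you sum $n \sim 1/\Delta t$ such terms. The paper closes this in Lemma~\ref{lemma:approx-Sigma-vector}: it defines $F(s) = \Sigma_{t+s}^{\half}(\Sigma_{t+s}^{\half}\Sigma_t\Sigma_{t+s}^{\half})^{-\half}\Sigma_{t+s}^{\half}$, applies Taylor's theorem with Lagrange remainder, and uses observability --- which keeps $\Sigma_t$ positive definite and bounded on $[0,T]$ --- to bound $\ddot F$ uniformly, so each step contributes a uniform $O(\Delta t^2)$ and the total error is $O(\Delta t)$. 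This simultaneously answers your differentiability worry: smoothness of the matrix square root on the positive-definite cone, combined with $\{\Sigma_t : t \in [0,T]\}$ staying uniformly away from singularity, is exactly what legitimizes the expansion. A further minor difference: you discretize the Kalman mean with an Euler--Maruyama increment $\hat{X}_{k+1}-\hat{X}_k \approx A\hat{X}_k\Delta t + \kii_k(\Delta Z_k - C\hat{X}_k\Delta t)$, introducing an extra (harmless but unnecessary) approximation; the paper instead telescopes $\sum_k (\hat{X}_{t_{k+1}}-\hat{X}_{t_k}) = \hat{X}_{t_n}-\hat{X}_{t_0}$ exactly, so the only error requiring control is the one from $F_k$. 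With the uniform remainder bound supplied, your argument is correct and matches the paper's.
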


\medskip
\begin{remark}
The matrix equation \eqref{eq:G} is the Lyapunov equation with $\Sigmaii_t \succ0$. 
A unique solution $G_t$ exists and is furthermore a symmetric matrix.
Next comparisons are drawn between the optimal transport sde \eqref{eq:opt-sde-vector} and the FPF \eqref{eq:FPF-lin}. 
To aid the comparison $G_t$ is expressed as, 
\begin{equation}
G_t = A + \frac{1}{2}\Sigmaii_t^{-1} - \frac{1}{2}\Sigmaii_tC^TC + \Om_t\Sigmaii_t^{-1},
\label{eq:G-opt}
\end{equation} 
where $\Om_t$ is the unique solution to the following matrix equation,
\begin{equation}
\Om_t\Sigmaii_t^{-1} + \Sigmaii_t^{-1}\Om_t = A^T - A + \half(\Sigmaii_tC^TC - C^TC\Sigmaii_t).
\label{eq:N}
\end{equation}
Using \eqref{eq:G-opt}, the optimal transport sde \eqref{eq:opt-sde-vector} has the form,
\begin{equation*}
\begin{aligned}
\ud \Xii_t= &A\Xii_t \ud t + \frac{1}{2}\Sigmaii_t^{-1}(\Xii_t-\hatXii_t) \ud t + \kii_t
 \big( \ud Z_t - \frac{C \Xii_t + C \hatXii_t}{2} \ud t \big) +\\
 &\Om_t\Sigmaii_t^{-1}(\Xii_t-\hatXii_t) \ud t. 
\end{aligned}
\label{eq:FPF-N}
\end{equation*} 
Comparing to the original FPF \eqref{eq:FPF-lin} there are two differences.
\begin{enumerate}
\item The stochastic term $\ud B_t$ is replaced with the deterministic term, $\frac{1}{2}\Sigmaii_t^{-1}(\Xii_t-\hatXii_t)\ud t$,
which similar to scalar case.
\medskip
\item In the vector case, there is an "extra" term  
$\Om_t\Sigmaii_t^{-1}(\Xii_t-\hatXii_t)\ud t$,
where $\Om_t$ is the skew symmetric matrix, solution to \eqref{eq:N}. 
The extra term does not effect the distribution of $\Xii_t$. 
This term can be viewed as a correction term that serves to make the dynamics "symmetric" and hence optimal in the optimal transportation sense. 
This term was absent in scalar case because the only $1\times 1$ skew symmetric matrix is $0$.
\end{enumerate}

\end{remark}
\medskip
\begin{remark}
In practice, solving the Lyapunov equation \eqref{eq:G} for $G_t$ requires additional computational cost, which was absent in the original FPF. 
\end{remark}
\begin{figure*}[t]
\begin{tabular}{cc}
\subfigure[]{
\includegraphics[width=0.9\columnwidth]{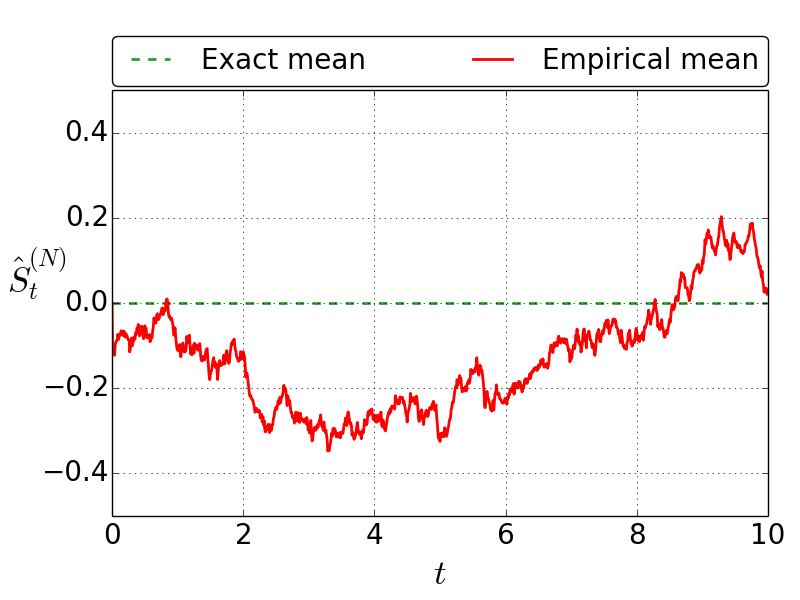}
\label{fig:mean-MC}
}&
\subfigure[]{
\includegraphics[width=0.9\columnwidth]{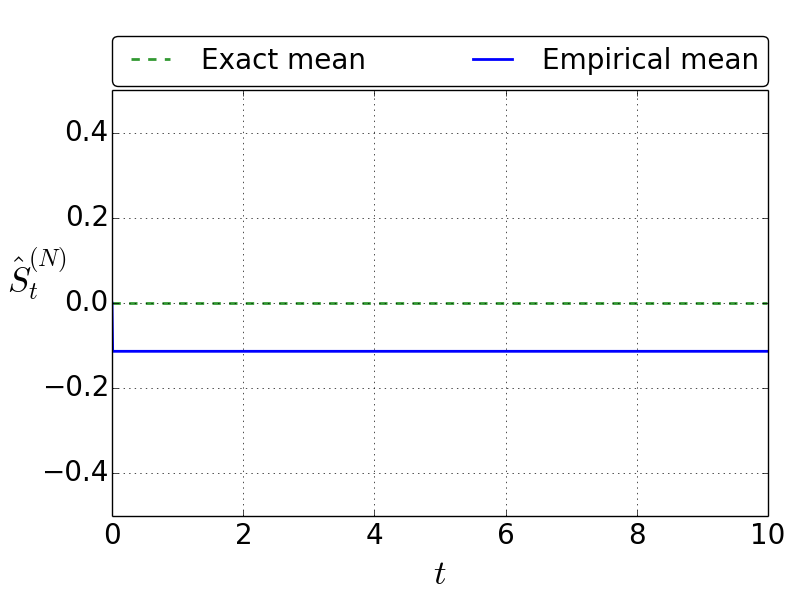}
\label{fig:mean-OT}
}
\end{tabular}
\caption{Empirical mean of the particles in a single simulation: (a) Monte-Carlo sde \eqref{eq:MC}, (b) Optimal transport sde \eqref{eq:opt-transp-MC}, where number of particles $N=80$ for each case.}
\end{figure*}
\section{Numerics}
\label{sec:numerics}
The aim of this section is to show that a deterministic approach, such as the one discussed in this paper, can reduce the simulation variance. 
This phenomenon is illustrated in the simplest possible setting of simulating a Brownian motion.    

\newP{Problem statement} Consider a real-valued stochastic process $\X_t \in \Re$,
\begin{equation}
\ud \X_t = \ud B_t,
\label{eq:brownian}
\end{equation}
where the initial condition $X_0$ is distributed according to a Gaussian distribution $\NN(0,1)$, and $\{B_t\}$ is standard Wiener process.
Indeed, the probability distribution of $\X_t$, denoted as $\PP(\X_t)$, is a Gaussian $\NN(0,1+t)$. 
The objective is to approximate $\PP(\X_t)$ using the Monte-Carlo and the optimal transport methods, and to compare the accuracy of these methods. 

\newP{Monte-Carlo} In the first method, a system of $N$ particles are simulated independently according to,
\begin{equation}
\ud \Xii^i_t = \ud B^i_t, \quad \text{for}\quad i=1,\ldots,N,
\label{eq:MC}
\end{equation}
where $\{B_t^i\}$ are independent Wiener processes, and $\Xii^i_0$ are i.i.d samples drawn from initial distribution $\NN(0,1)$. 

\newP{Optimal transport} In the second method, a system of $N$ particles are simulated according to the optimal transport sde,
\begin{equation}
\ud \Xii^i_t = \frac{1}{2\Sigmaii^{(N)}_t}(\Xii_t-\hatXii^{(N)})\ud t, 
\label{eq:opt-transp-MC}
\end{equation}  
for $i=1,\ldots,N$, where $\Xii^i_0$ are i.i.d samples drawn from initial distribution $\NN(0,1)$, and 
\begin{equation*}
\begin{aligned}
\hatXii_t^{(N)}&:=\frac{1}{N}\sum_{i=1}^N \Xii^i_t,\quad 
\Sigmaii_t^{(N)} := \frac{1}{N}\sum_{i=1}^N (\Xii_t^i-\hatXii_t^{(N)})^2.
\end{aligned}
\end{equation*}
Note that in the limit as $N \to \infty$ one obtains  
\[\ud \Xii_t=\frac{1}{2\Sigmaii_t}(\Xii_t-\hatXii_t)\ud t\]
which according to Remark \ref{remark:scalar} is the deterministic counterpart of \eqref{eq:MC}. 

\newP{Particle trajectories} The trajectories of particles obtained by simulating the Monte-Carlo model \eqref{eq:MC} and the optimal transport model \eqref{eq:opt-transp-MC} are depicted in  
Figure \ref{fig:traj-FPF} and Figure \ref{fig:traj-opt-sde}, respectively.

\newP{Estimating the mean}
The mean of $X_t$ is estimated as the empirical mean of the particles,
\begin{equation*}
\expect[\X_t] \approx \hat{\Xii}_t^{(N)} = \frac{1}{N}\sum_{i=1}^N \Xii^i_t
\end{equation*} 
Figure \ref{fig:mean-MC} and Figure \ref{fig:mean-OT} depict the result for Monte-Carlo method and Optimal transport method respectively. 
As expected, the optimal transport method, has less "randomness". 
In fact, one can obtain an explicit formula for the empirical mean in this case.
\begin{equation*}
\begin{aligned}
\ud \hatXii^{(N)}_t = \frac{1}{N}\sum_{i=1}^N \ud \Xii^i_t= &\frac{1}{N}\sum_{i=1}^N\frac{1}{2\Sigmaii_t^{(N)}}(\Xii_t^i-\hatXii_t^{(N)})\ud t =0. \\
\end{aligned}
\end{equation*}
So, the empirical mean remains constant, equal to its initial value,
\begin{equation*}
\hatXii^{(N)}_t = \hatXii^{(N)}_0 .
\label{eq:emp-mean}
\end{equation*}
\begin{figure*}[t]
\begin{tabular}{cc}
\subfigure[]{
\includegraphics[width=0.9\columnwidth]{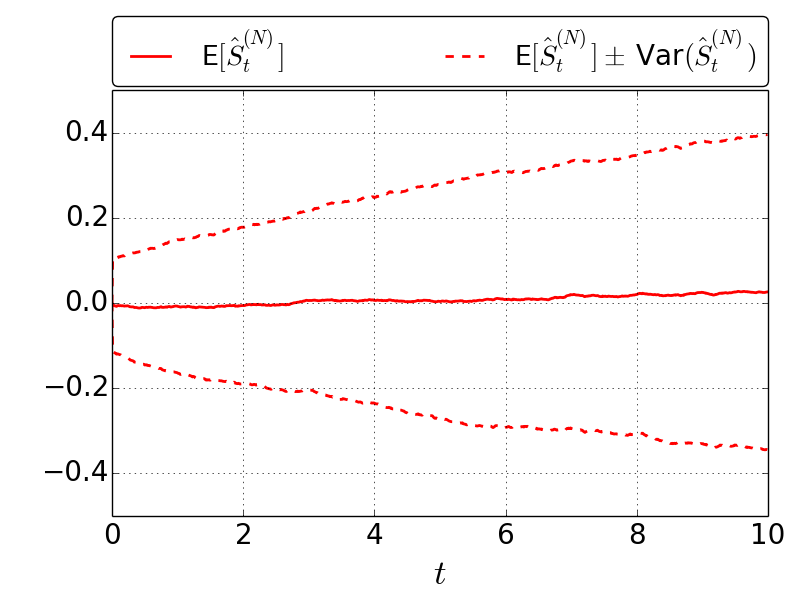}
\label{fig:sim-var-mean-MC}
}&
\subfigure[]{
\includegraphics[width=0.9\columnwidth]{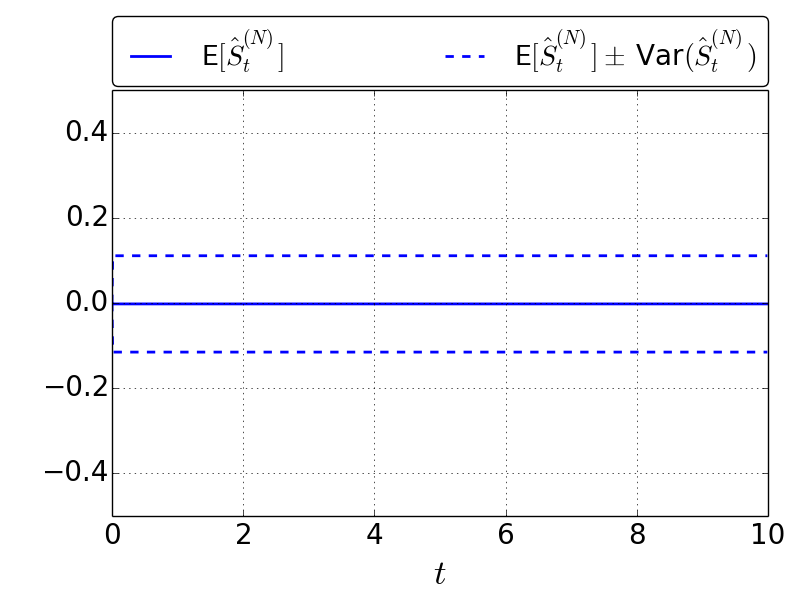}
\label{fig:sim-var-mean-OT}
}
\end{tabular}
\caption{The average of the empirical mean $\hatXii^{(N)}$ $\pm$ simulation variance, over $500$ simulations: (a) Monte-Carlo, (b) Optimal transport, where number of particles $N=80$ for each case.}
\end{figure*}

\newP{Simulation variance} 
Any estimate obtained using a simulation with finitely many particles is a random variable. 
The variance of this random variable is called simulation variance; c.f,~\cite{glynn2007}. 
Figure \ref{fig:sim-var-mean-MC} and Figure \ref{fig:sim-var-mean-OT} depict the mean and variance of $\hatXii_t^{(N)}$, obtained after averaging over $500$ simulations, for the Monte-Carlo method and the optimal transportation method respectively. 
The figure shows that the simulation variance for the Monte-Carlo method increases with time, whereas for the optimal transport method, it remains constant. 
In this simple case, this observation can also be verified analytically:
\begin{enumerate}
\item Monte-Carlo method:
%
\begin{equation*}
\begin{aligned}
\var(\hatXii^{(N)}_t) &= \var(\frac{1}{N}\sum_{i=1}^N\Xii^i_t)
=\var(\frac{1}{N}\sum_{i=1}^N(\Xii^i_0+B^i_t))=\frac{1+t}{N}.
\end{aligned}
\end{equation*}
\medskip
\item Optimal transport:
\begin{equation*}
\begin{aligned}
\var(\hatXii_t^{(N)}) &= \var(\hatXii_0^{(N)}) = \var(\frac{1}{N}\sum_{i=1}^N\Xii^i_0) = \frac{1}{N}.
\end{aligned}
\end{equation*}
\end{enumerate}
The calculation shows that for the Monte-Carlo method, the simulation variance grows with time, whereas for the optimal transport method, it remains constant.
In fact, the same result holds true for estimating the variance as shown in Figure~\ref{fig:sim-var-var-MC} and Figure~\ref{fig:sim-var-var-OT}, and explicitly given by,  
\begin{equation*}
\begin{aligned}
\text{Monte-Carlo:} &\quad \var(\Sigmaii_t^{(N)}) \approx \frac{3(1+t)^2}{N},\\
\text{Optimal transport:}&\quad \var(\Sigmaii_t^{(N)}) \approx \frac{3}{N}.
\end{aligned}
\end{equation*}

\begin{remark}
The result shows that in the optimal transport model \eqref{eq:opt-transp-MC}, $\Xii_1^t,\ldots,\Xii_t^N$ are correlated and the correlation serves to reduce the simulation variance. 
\end{remark}
\section{Conclusion}      
In this paper, an optimal transport formulation of the FPF algorithm is introduced. 
The optimal transport FPF is obtained by employing a time-step optimization procedure. 
In future, one would like to relate the current optimization procedure to an optimal control problem with an integral cost. 
The other extension would be to consider the general nonlinear non-Gaussian filtering problem.  
     
\appendix
\subsection{Proof of Prop. \ref{prop:opt-map-Gaussians}}
Let $X \sim \NN(\hat{X},\Sigma_X)$, and $Y:=T(X) \sim \NN(\hat{Y},\Sigma_Y)$. 
Consider the special case $\hat{X}=\hat{Y}=0$. 
The transportation cost is,
\begin{equation*}
\begin{aligned}
\expect\left[|Y - X|^2\right] &= \expect\left[X^TX\right] + \expect\left[Y^TY\right] -2\expect\left[X^TY\right] \\
&=\trace(\Sigma_X) + \trace(\Sigma_Y) -  2\expect\left[X^TY\right].
\end{aligned}
\end{equation*}
This shows that minimization of the transportation cost, is equivalent to maximization of the covariance $\expect[X^TY]$. 
For the covariance, one has the upper bound,
\begin{equation*}
2\expect\left[X^TY\right] \leq \trace(F\Sigma_X) + \trace(\Sigma_YF^{-1}),
\end{equation*} 
where $F=\Sigma_Y^{\half}(\Sigma_Y^{\half}\Sigma_X\Sigma_Y^\half)^{-\half}\Sigma_Y^\half$, given by \eqref{eq:F}. This bound is obtained by using the inequality,
\begin{equation*}
\begin{aligned}
0\leq&\expect\left[|F^{\half}X - F^{-\half}Y|_2^2\right]\\ 
=&\expect\left[X^TFX\right] + \expect\left[Y^TF^{-1}Y\right] - 2\expect\left[X^TY\right]\\
=&\trace(F\Sigma_X)+ \trace(\Sigma_YF^{-1}) - 2\expect\left[X^TY\right],
\end{aligned}
\end{equation*}
where symmetry of $F$ is used.
Finally the Proposition is proved by showing that the optimal map
\[Y^*=FX\]
achieves the upper bound, and therefore  minimizes the cost.
Indeed, one can easily check that $Y^* \sim \NN(\hat{Y},\Sigma_Y)$, and
\begin{equation*}
2\expect[X^TY^*] = 2\expect[X^TFX] = 2\trace(F\Sigma_X) = \trace(F\Sigma_X) + \trace(\Sigma_YF^{-1}),
\end{equation*}
where the identity $F\Sigma_XF=\Sigma_Y$ is used. 
The proof for the non-zero mean case is similar.
\qed
\begin{figure*}[t]
\begin{tabular}{cc}
\subfigure[]{
\includegraphics[width=0.9\columnwidth]{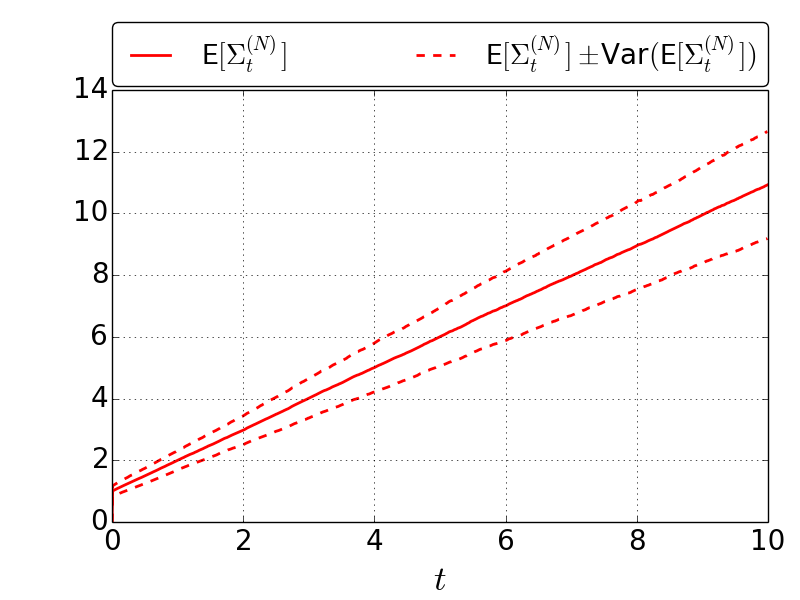}
\label{fig:sim-var-var-MC}
}&
\subfigure[]{
\includegraphics[width=0.9\columnwidth]{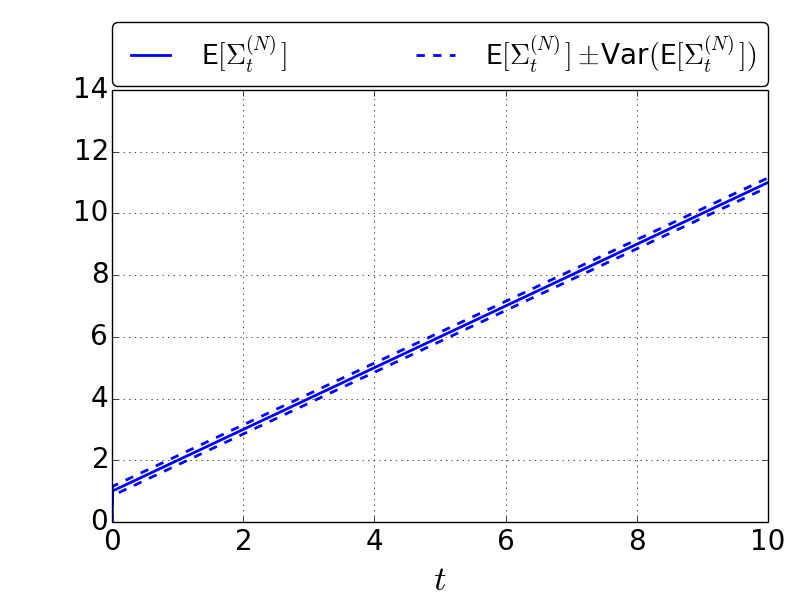}
\label{fig:sim-var-var-OT}
}
\end{tabular}
\caption{The average of the empirical variance $\Sigmaii_t^{(N)}$ $\pm$ simulation variance, over $500$ simulations for the two methods: (a) Monte-Carlo, (b) Optimal transport, where the number of particles $N=80$.}
\end{figure*}
\subsection{Proof of Prop.~\ref{prop:linear-opt-sde-scalar} and Prop.~\ref{prop:opt-sde-vector}}
\label{proof:opt-sde}
The key step in the proof is the following Lemma, 
\begin{lemma}
Consider the ode \eqref{eq:kalman-variance}. Let $\Sigma_t$ be the solution for $t\in [0,T]$. 
Then the following relationship holds ,
\begin{equation}
\Sigma_{t + \Delta t}^{\half}
(\Sigma_{t+ \Delta t}^{\half}
\Sigma_t\Sigma_{t+\Delta t}^\half)^{-\half}
\Sigma_{t+\Delta t}^\half =
I + G_t\Delta t + O(\Delta t^2),
\label{eq:approx-Sigma-vector}
\end{equation}
where $G_t$ is the solution to the matrix equation,
\begin{equation}
G_t\Sigma_t + \Sigma_tG_t = A\Sigma_t+ \Sigma_t A^T + I - \Sigma_tC^TC\Sigma_t,
\label{eq:G-lemma}
\end{equation}
and the second order term is uniformly bounded for all $t \in [0,T]$.
\label{lemma:approx-Sigma-vector}
\end{lemma}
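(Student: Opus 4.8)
The plan is to recognize that the left-hand side of \eqref{eq:approx-Sigma-vector} is precisely the linear coefficient $F$ of the optimal transport map between the Gaussians $\NN(0,\Sigma_t)$ and $\NN(0,\Sigma_{t+\Delta t})$ furnished by \Prop{prop:opt-map-Gaussians}, under the identification $\Sigma_X=\Sigma_t$ and $\Sigma_Y=\Sigma_{t+\Delta t}$. Writing $F=F(\Delta t)$ for this matrix, two facts will drive the argument: (i) $F$ is symmetric and positive definite and satisfies the algebraic identity $F\Sigma_t F=\Sigma_{t+\Delta t}$ (the constraint used at the end of the proof of \Prop{prop:opt-map-Gaussians}); and (ii) at $\Delta t=0$ one has $\Sigma_{t+\Delta t}=\Sigma_t$, so $F(0)=\Sigma_t^{\half}\Sigma_t^{-1}\Sigma_t^{\half}=I$, matching the zeroth-order term. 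Rather than differentiate the square-root formula \eqref{eq:F} directly, I would work entirely through the constraint $F\Sigma_t F=\Sigma_{t+\Delta t}$, which is far cleaner.

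First I would establish the needed regularity. Since $\Sigma_t$ solves the smooth Riccati ode \eqref{eq:kalman-variance} on $[0,T]$ and the Riccati flow preserves positive definiteness, $\Sigma_t\succ 0$ for every $t\in[0,T]$; by continuity on the compact interval, $\Sigma_t$ stays bounded and bounded away from singularity uniformly in $t$. Consequently the matrix $\Sigma_{t+\Delta t}^{\half}\Sigma_t\Sigma_{t+\Delta t}^{\half}$ remains in a fixed compact subset of the open cone of positive definite matrices for all small $\Delta t$ and all $t\in[0,T]$. Because the maps $M\mapsto M^{\half}$ and $M\mapsto M^{-\half}$ are smooth (indeed analytic) on that cone, and $\Delta t\mapsto\Sigma_{t+\Delta t}$ is smooth with $\Sigma_{t+\Delta t}=\Sigma_t+\dot\Sigma_t\,\Delta t+O(\Delta t^2)$, it follows that $F(\Delta t)$ is smooth in $\Delta t$ and admits the expansion $F=I+G_t\,\Delta t+O(\Delta t^2)$ with a remainder uniformly bounded over $t\in[0,T]$; here $G_t:=F'(0)$ is symmetric, being the derivative of symmetric matrices.

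Next I would pin down $G_t$ by substituting this expansion into the constraint. Expanding the left-hand side gives $F\Sigma_t F=\Sigma_t+(G_t\Sigma_t+\Sigma_t G_t)\,\Delta t+O(\Delta t^2)$, while the right-hand side is $\Sigma_{t+\Delta t}=\Sigma_t+\dot\Sigma_t\,\Delta t+O(\Delta t^2)$, where $\dot\Sigma_t=A\Sigma_t+\Sigma_t A^T+I-\Sigma_t C^T C\Sigma_t$ by \eqref{eq:kalman-variance}. Matching the first-order terms yields
\begin{equation*}
G_t\Sigma_t+\Sigma_t G_t=A\Sigma_t+\Sigma_t A^T+I-\Sigma_t C^T C\Sigma_t,
\end{equation*}
which is exactly \eqref{eq:G-lemma}. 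Since $\Sigma_t\succ 0$, this Lyapunov equation has a unique solution, so $G_t=F'(0)$ is indeed characterized as stated, completing the identification.

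I expect the main obstacle to be the regularity and uniformity step rather than the algebra: one must argue carefully that the square-root and inverse-square-root maps have uniformly bounded derivatives, so that the $O(\Delta t^2)$ remainder is uniform over the compact interval $[0,T]$. The key enabling observation is that $\Sigma_t$ remains in a compact subset of the positive-definite cone on $[0,T]$, on which these matrix functions are analytic with bounded derivatives; the first-order matching via $F\Sigma_t F=\Sigma_{t+\Delta t}$ is then routine.
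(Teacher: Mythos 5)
Your proposal is correct and takes essentially the same route as the paper: both treat the left-hand side as $F(s)=\Sigma_{t+s}^{\half}(\Sigma_{t+s}^{\half}\Sigma_t\Sigma_{t+s}^{\half})^{-\half}\Sigma_{t+s}^{\half}$, extract the first-order coefficient from the identity $F(s)\Sigma_t F(s)=\Sigma_{t+s}$ combined with the Riccati equation \eqref{eq:kalman-variance}, identify it as the unique solution of the Lyapunov equation \eqref{eq:G-lemma} (unique since $\Sigma_t\succ 0$), and get uniformity of the $O(\Delta t^2)$ remainder from regularity on the compact interval $[0,T]$. Your coefficient-matching in place of the paper's explicit Taylor expansion with Lagrange remainder, and your compactness-plus-analyticity justification in place of the paper's appeal to observability for boundedness of $\Sigma_t$, are only cosmetic variations.
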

\begin{proof}
The solution $\Sigma_t$ is positive and bounded since the system is observable~\cite{ocone1996}. 
Fix $t \in [0,T]$, and define 
\begin{equation*}
F(s):=\Sigma_{t + s}^{\half}
(\Sigma_{t+ s}^{\half}
\Sigma_t\Sigma_{t+s}^\half)^{-\half}
\Sigma_{t+s}^\half. 
\end{equation*}
The relationship \eqref{eq:approx-Sigma-vector} is obtained by considering the Taylor series of $F(s)$ at $s=0$,
\begin{equation*}
F(\Delta t) = I + \dot{F}(0)\Delta t + \frac{1}{2}\ddot{F}(\tau)\Delta t^2,
\end{equation*} 
for some $\tau \in [0,\Delta t]$, and showing that $\dot{F}(0)=G_t$.
This is verified by considering,
\begin{equation*}
F(s)\Sigma_tF(s)= \Sigma_{t+s}.
\end{equation*}  
On evaluating the derivative with respect to $s$ at $s=0$,
\begin{equation*}
\dot{F}(0)\Sigma_t + \Sigma_t\dot{F}(0) = A\Sigma_t+ \Sigma_t A^T + I - \Sigma_tC^TC\Sigma_t.
\end{equation*}
Since the solution to the Lyapunov equation \eqref{eq:G-lemma} is unique, $\dot{F}(0)=G_t$.
Also the second order derivative is uniformly bounded for all $t \in [0,T]$, by the observability assumption. 
\end{proof}

\begin{proof}(Prop.~\ref{prop:opt-sde-vector})
The proof of exactness is similar to the proof of Theorem~\ref{thm:consistency-FPF-lin} and is omitted. 
In order to obtain the optimal transport sde, the time stepping procedure is used.
The key step in the procedure is to obtain the optimal transport map $T_k$.
The optimal map is between two Gaussians, $\NN(\hat{X}_{t_\ii},\Sigma_{t_\ii})$ and $\NN(\hat{X}_{t_{\ii+1}},\Sigma_{t_{\ii+1}})$.
By Proposition~\ref{prop:opt-map-Gaussians}, the optimal map is,
\begin{equation*}
\Xii_{t_{\ii+1}}=\hat{X}_{t_{\ii+1}} + F_k(\Xii_{t_\ii}-\hat{X}_{t_{\ii}}),
\label{eq:XiiTempRd}
\end{equation*} 
where $F_\ii=\Sigma_{t_{\ii+1}}^{\half}(\Sigma_{t_{\ii+1}}^{\half}\Sigma_{t_{\ii}}\Sigma_{t_{\ii+1}}^\half)^{-\half}\Sigma_{t_{\ii+1}}^\half$.
Using Lemma \ref{lemma:approx-Sigma-vector}, 
\begin{equation*}
\begin{aligned}
\Xii_{t_{\ii+1}}
&=\hat{X}_{t_{\ii+1}} + (\Xii_{t_\ii} - \hat{X}_{t_\ii}) +  G_{\ii}(\Xii_\ii - \hat{X}_\ii)\Delta t + O(\Delta t^2). 
\end{aligned}
\end{equation*}
To obtain the sde, take a sum over $k=0,1,\ldots,n-1$, 
\begin{equation*}
\begin{aligned}
\Xii_{t_n}&=\Xii_{t_0} + \hat{X}_{t_n}-\hat{X}_{t_0} + \sum_{\ii=0}^{n-1}\big[G_\ii(\Xii_{t_\ii} - \hat{X}_{t_\ii})\Delta t + O(\Delta t^2)\big].
\end{aligned}
\end{equation*}
In the limit as $\Delta t \to 0$,
\begin{equation*}
\begin{aligned}
\Xii_{t_n}&= \Xii_{t_0} + \hat{X}_{t_n}-\hat{X}_{t_0}+\int_{0}^{t}G_s(\Xii_s-\hat{X}_s)\ud s.
\end{aligned}
\end{equation*}
where the uniform boundedness of the second order term is used. 
The associated sde is,
\begin{equation*}
\ud \Xii_t = \ud \hat{X}_t + G_t(\Xii_t - \hat{X}_t)\ud t,
\end{equation*}
where $\ud \hat{X}_t$ is given by \eqref{eq:kalman-mean}. 
Finally one obtains \eqref{eq:opt-sde-vector} by replacing $\hat{X}_t$ and $\Sigma_t$ with $\hatXii_t$ and $\Sigmaii_t$ respectively, which are identical by exactness. 
\end{proof}
\medskip
Proof of Prop~\ref{prop:linear-opt-sde-scalar} is a special case of this proof. 
\bibliographystyle{plain}
\bibliography{fpfbib}
\end{document}